\numberwithin{theorem}{section} \numberwithin{lemma}{section} \numberwithin{corollary}{section} \numberwithin{definition}{section} \numberwithin{remark}{section} \numberwithin{corollary}{section} \numberwithin{example}{section} \numberwithin{proposition}{section}
\begin{document}

\title{Order divisor graphs of finite groups}
\author{S. U. Rehman$^1$, A. Q. Baig$^1$, M. Imran $^2$, and Z. U.  Khan$^1$}

\vspace{0.4cm}

\institute{$^1$Department of Mathematics,\\
COMSATS Institute of Information  and Technology,\\
Attock, Pakistan.\\
$^2$ Department of Mathematical Sciences,\\
United Arab Emirates University,\\ P.O. Box 15551, Al Ain, United Arab Emirates.\\
$^2$Department of Mathematics,\\
School of Natural Sciences(SNS),\\
National University of Sciences and Technology (NUST),\\
Sector H-12, Islamabad, Pakistan.
\\
E-mail: \{aqbaig1, imrandhab, zizu006\}@gmail.com\\
shafiq\_ur\_rahman2@yahoo.com}

\vspace{0.4cm}

\authorrunning{S. U. Rehman, A. Q. Baig, M. Imran, Z. U.  Khan}
\titlerunning{Order divisor graphs of finite groups} \maketitle

\markboth{\small{S. U. Rehman, A. Q. Baig, M. Imran, Z. U.  Khan}} {\small{Order divisor graphs of finite groups}}

\begin{abstract}
The interplay between groups and graphs have
been  the most famous and productive area of algebraic graph theory. In this paper, we introduce and study the graphs whose
vertex set is  group $G$ such that two distinct vertices $a$ and $b$ having
different orders are adjacent provided that $o(a)$ divides $o(b)$ or $o(b)$ divides $o(a)$.
\end{abstract}

\centerline{{\bf 2010 Mathematics Subject Classification:} 05C25}
\vspace{.3cm} {\bf Keywords:} Finite groups, Cyclic groups,  p-groups, Elementary abelian groups,  Connected Graphs,
Star Graphs, Sequential Joins.

\section[Introduction]{Introduction}

                       All finite groups can be represented as the automorphism group of a connected graph \cite{F}. A graphical representation of group can be given by a set of generators and relations. Given any group, symmetrical graphs known as Cayley graphs can be generated, and these have properties related to the structure of the group \cite{G}. Relating a graph to a group  provides a method of visualizing a group and connects two important branches of mathematics. It gives a review of cyclic groups, dihedral groups, direct products,  generators and relations. \\

   Groups are the main mathematical tools for studying symmetries of an object and symmetries are usually
related to graph automorphisms. Many structures in abstract algebra are special cases of groups.  Graph theory is one of the leading research field in mathematics mainly because of its applications in diverse fields which include biochemistry, electrical engineering, computer science and operations research. These both branches of mathematics are playing a vital role in modern mathematics. In group theory we study and analyze different groups and their structures while in graph theory we focus on the graphs that denotes the structure of materials and objects. The powerful combinatorial methods found in graph theory have also been used to prove significant and well-known results in a variety of areas in mathematics including group theory.  \\

In the last few decades the researchers have focused on the modified form of groups and graphs by inter relating their properties. The study of the algebraic structures using the properties of graphs has become an inspiring research topic in the last twenty years, leading to many fascinating results and questions, see \cite{AR2}, \cite{AR1}, \cite{R}, and \cite{Re}. In this paper, we study a graph related to finite groups. We call a graph  an {\em order divisor graph}, denoted by $OD(G)$,  if its vertex set is a finite group $G$ and two distinct vertices $a$ and $b$ having different orders are adjacent provided that $o(a)$ divides $o(b)$ or $o(b)$ divides $o(a)$.\\

For the
reader's convenience we give a working introduction here for the notions
involved. A group $G$ is called a {\em $p$-group} if every element of $G$ has order a power of $p$, where $p$ is prime.  The {\em exponent of a group} $G$ is the smallest positive integer $m$ such that $g^m=e$ for all $g \in G$. An abelian group $G$ is called an {\em elementary abelin group} or {\em elementary abelin $p$-group} if  it is a $p$-group of exponent $p$ for some prime $p$, i.e., $x^p=e$~ $ \forall ~x \in G$. A  finite {\em elementary abelin group} is a group that is isomorphic to $\mathbb{Z}_p^n$ for some prime $p$ and for some positive integer $n$. The  group of symmetries of a regular $n$-gon ($n \geq 3$) is called a {\em dihedral group} or order $2n$. The dihedral group has presentation $D_n=<a,b \mid a^n=b^2=(ab)^2=e>$. We denote by $U(\mathbb{Z}_n)$, the group of units of $\mathbb{Z}_n$, i.e., $U(\mathbb{Z}_n)=\{ \bar{x} \in \mathbb{Z}_n \mid (x,n)=1\}$.  For any two elements $a,b$ of a group $G$, $[a,b]$ denote the {\em commutator} $a^{-1}b^{-1}ab$. If $A,B$ are subsets of a group $G$ then $[A,B]=<[a,b] \mid a \in A, b \in B>$. Particularly, $[G,G]$ is called {\em commutator subgroup} denoted by $G'$. A group $G$ is called {\em nilpotent group} if and only if it is the direct product of its Sylow subgroups, cf. \cite[Chapter 2, Section 3]{G2}. The unique maximal nilpotent normal subgroup of a group $G$ is called {\em Fitting subgroup} of $G$ and is denoted by $F(G)$, cf. \cite[Chapter 6, Section 1]{G2}. A simple connected graph is an undirected graph without any loops and multiple edges. A complete bipartite graph is a bipartite graph (i.e., a set of graph vertices decomposed into two disjoint sets such that no two graph vertices within the same set are adjacent) such that every pair of graph vertices in the two sets are adjacent. The star graph $S_n$ of order $n$ is a tree with one vertex of degree $n-1$ and all other vertices have degree $1$, i.e., $S_n \cong K_{1,n}$. We denote by $G_1 \diamond G_2 \diamond \cdots \diamond G_k$  the sequential join of graphs $G_1,G_2,...,G_k$, where $G_i \diamond G_{i+1}=G_i \vee G_{i+1}$ for all $1 \leq i \leq k-1$, i.e., by adding an edge from each vertex of $G_i$ to each vertex of $G_{i+1}$, $1 \leq i \leq k-1$ The chromatic number  $\chi(G)$ of a graph $G$ is defined to be the minimum number of colors required to color the vertices of $G$,  i.e., $\chi(G)=min\{k:G~is~k-colorbale\}$.\\

In this paper we obtain the following results. The order divisor graph $OD(G)$ of a finite group $G$ is a star graph if and only if every non-identity element of $G$ has prime order (Theorem \ref{9}). For an abelian group $G$, $OD(G)$ is a star graph if and only if $G$ is elementary abelian (Corollary \ref{13}). The order divisor graph $OD\big(U(\mathbb{Z}_n)\big)$ is a star graph $S_{\phi(n)}$ if and only if $n \mid 24$ (Corollary \ref{11}). The order divisor graph $OD(\mathbb{Z}_n)$ is a star graph if and only if $n$ is prime (Corollary \ref{12}). The order divisor graph $OD(G)$ of a (finite) group $G$ is a star graph if and only if $G$ is a $p$-group of exponent $p$, or a non-nilpotent group of order $p^aq$, or it is
isomorphic to the simple group $\mathcal{A}_5$ (Corollary \ref{10}). The order divisor graph of the dihedral group  $D_n$ $(n \geq 3)$ is a star graph $S_{2n}$ if and only if $n$ is  prime (Theorem \ref{4}). If $G$ is a finite $p$-group of order $p^n$ then
$OD(G)$ is a complete multi-partite graph (Theorem \ref{2}). If $G$ is a finite cyclic group of order $p^n$ then $OD(G)$ is complete $(n+1)$-partite graph (Theorem \ref{3}). If $G$ is a finite cyclic group of order $p^n$, then $\chi(OD(G))=n+1$ (Corollary \ref{6}).  If $G$ is a cyclic group of order $p_1p_2$, where $p_1, p_2$ are distinct primes, then $OD(G)$ is a sequential join of graphs (Theorem \ref{5}). Similarly, if $G$ is a cyclic group of order $p_1p_2p_3$, where $p_1, p_2, p_3$ are distinct primes, then $OD(G)$ is obtained by certain type of  sequential and cyclic joins. (Theorem \ref{7}).  Let $n \in \mathbb{N}$ and let $D$ be the set of all $(positive)$ divisors of $n$. Define a partial order $\preceq$ on $D$ by $a \preceq b$ if and only if $a \mid b$. Then $(D, \preceq)$ is a bounded lattice. We denote by $G_n$  the comparability graph on $(D, \preceq)$. In other words, $G_n$ is a simple undirected graph with vertex set $D$ and two vertices $a$ and $b$ are
adjacent if and only if $a \neq  b$ and either $a \preceq b$ or $b \preceq a$. The new extended graph is denoted by $\mathcal{E}(G_n)$.  Given a graph $G= (V, E)$, the reduced graph of $G$, denoted by $\mathcal{R}(G)$,   is obtained from $G$ by merging those vertices which has same
set of closed neighbors. Note that a closed neighbor of $v \in V$ is the set $\{u \in  V | uv \in E\} \cup\{v\}$. We obtain that if $G$ is finite group of order $n$, then $G$ is cyclic if and only if $\mathcal{E}(G_n) \cong OD(G)$
if and only if  $G_n \cong \mathcal{R}(OD(G))$ (Theorem \ref{8}).\\

In this paper all the groups and graphs discussed are finite.  We follow the terminologies and notations of \cite{G} for groups and \cite{W} for graphs.

\section{Main results}

\begin{definition}
Let $G$ be a finite group. Then $OD(G)$ denotes the order divisor graph whose
vertex set is $G$ such that two distinct vertices $a$ and $b$ having
different orders are adjacent provided that $o(a)  \mid  o(b)$ or
$o(b) \mid o(a)$.
\end{definition}

\begin{remark}Some easy consequences of the definition are:

\begin{enumerate}

\item[(i)]  $OD(G)$ is a simple graphs, so there are no loops and multiple edges.

\item[(ii)] Since the identity element is the only element of a group having order one, so the vertex associated to the identity element
is adjacent to each vertex and hence $OD(G)$ is always a connected graph. Due to similar reason, if $|G| >2$, then $OD(G)$ has diameter $2$.

\item[(iii)]  Since the vertex associated to the identity
is adjacent to each vertex, therefore,  if $|G| >3$ then $OD(G)$ is not a cycle. If $|G|=3$ then $G$ has two elements of order $3$ and the vertices associated to these two elements are not adjacent. Hence $OD(G)$ cannot be a cycle.

    \item[(iv)] If $G$ is finite,  then for every divisor $d$ of $|G|$, the number of elements of order $d$ is a multiple of $\phi(d)$ ($\phi$ is Euler's phi function).  Hence, if $|G| > 2$, then $OD(G)$ cannot be a complete graph.

\end{enumerate}
\end{remark}

\begin{theorem}\label{9}
The order divisor graph $OD(G)$ is a star graph if and only if every non-identity element of the group $G$ has prime
order.
\end{theorem}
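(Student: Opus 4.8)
The plan is to exploit the observation (already recorded in the remark preceding the theorem) that the identity $e$, being the unique element of order $1$, is adjacent in $OD(G)$ to every other vertex. Hence, when $|G|\ge 3$, a vertex of $OD(G)$ has degree $|G|-1\ge 2$, so if $OD(G)$ is a star $K_{1,|G|-1}$ then $e$ must be its centre and every other vertex a leaf. Consequently the whole statement reduces to the clean combinatorial reformulation:
\emph{$OD(G)$ is a star if and only if no two non-identity elements of $G$ are adjacent}, i.e. whenever $a,b\in G\setminus\{e\}$ have $o(a)\ne o(b)$, neither $o(a)\mid o(b)$ nor $o(b)\mid o(a)$. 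I would state this reformulation first and then prove the two implications against it. (The degenerate cases $|G|\le 2$, where $OD(G)$ is a single vertex or $K_2$, are handled trivially and are consistent with the prime-order condition.)

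For the direction ``star $\Rightarrow$ every non-identity element has prime order'' I argue by contraposition. Suppose some $g\in G\setminus\{e\}$ has composite order $n=o(g)$, and let $p$ be a prime with $p\mid n$ and $p<n$. Then the element $h:=g^{n/p}$ has order $p$, so $h\in G$, $h\ne e$, and $h\ne g$ because $o(h)=p\ne n=o(g)$. Since the orders differ and $p\mid n$, the vertices $g$ and $h$ are adjacent in $OD(G)$; together with the edges $e$--$g$ and $e$--$h$ this exhibits a vertex other than $e$ of degree $\ge 2$, so $OD(G)$ is not a star.

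For the converse, assume every non-identity element of $G$ has prime order. Let $a,b\in G\setminus\{e\}$ be distinct and adjacent; by definition of $OD(G)$ this forces $o(a)\ne o(b)$ with one dividing the other. But $o(a)$ and $o(b)$ are then two distinct primes, which are incomparable under divisibility — a contradiction. Hence no edge of $OD(G)$ joins two non-identity vertices, so every edge is incident to $e$; combined with the fact that $e$ is adjacent to all of $G\setminus\{e\}$, we get $OD(G)\cong K_{1,|G|-1}$, a star graph.

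I do not expect a genuine obstacle here: the only external input is the elementary fact that a cyclic group of composite order $n$ contains an element of order $p$ for each prime $p\mid n$ (realized explicitly as $g^{n/p}$, so one does not even need Cauchy's theorem in full). The one point that deserves a careful sentence is the justification that $e$ must be the centre of the star, which is where the ``$|G|\ge 3$'' hypothesis and the uniqueness of the high-degree vertex in $K_{1,n}$ ($n\ge 2$) are used.
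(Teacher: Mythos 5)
Your proposal is correct and follows essentially the same route as the paper: both directions hinge on the fact that an element of composite order forces an edge between two non-identity vertices via a prime divisor of that order, while distinct primes are incomparable under divisibility. The only difference is cosmetic — the paper invokes Cauchy's theorem to produce the element of order $p$, whereas you obtain it directly as $g^{n/p}$ inside the cyclic subgroup generated by $g$, a slightly more elementary choice.
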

\begin{proof}

Clearly, if every non-identity element of $G$ has prime order, then $OD(G)$ is a star graph. Conversely, suppose that $OD(G)$ is a star graph and $x_1,x_2,...,x_n$ are distinct non-identity elements of $G$  with $o(x_i)=d_i$ for $1 \leq i \leq n$. As $OD(G)$ is a star graph, so $d_i \nmid d_j$ for all $i \neq j$. If $d_i$ is not prime for some $i$, then $d_i$ has a prime divisor, say $p$. But then by Cauchy's theorem, $G$ must have an element of order $p$. Thus $p=d_j$ for some $i \neq j$, which is a contradiction. Hence each  $d_i$ is prime.
\end{proof}

Recall \cite{G2} that an {\em elementary abelian group} or {\em elementary abelian $p$-group} is an abelian $p$-group of exponent $p$ for some fixed prime $p$. A finite {\em elementary abelian group} is a group that is isomorphic to $\mathbb{Z}_p^n$ for some prime $p$ and for some positive integer $n$.

\begin{corollary}\label{13}
Let $G$ be an abelian group. Then $OD(G)$ is a star graph if and only if $G$ is elementary abelian.
\end{corollary}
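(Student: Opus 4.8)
The plan is to derive this directly from Theorem \ref{9}, which characterizes star graphs $OD(G)$ as precisely those groups in which every non-identity element has prime order. So the corollary reduces to showing that for an abelian group $G$, the condition ``every non-identity element has prime order'' is equivalent to $G$ being elementary abelian. One direction is immediate: if $G \cong \mathbb{Z}_p^n$, then $x^p = e$ for all $x \in G$, so every non-identity element has order exactly $p$ (prime), and Theorem \ref{9} gives that $OD(G)$ is a star graph.

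For the converse, suppose $G$ is abelian and every non-identity element has prime order. First I would argue all these primes are equal: if $x$ has order $p$ and $y$ has order $q$ with $p \neq q$, then since $G$ is abelian the element $xy$ has order $\mathrm{lcm}(p,q) = pq$ (using that $\langle x\rangle \cap \langle y\rangle = \{e\}$ by coprimality), contradicting that every non-identity element has prime order. Hence there is a single prime $p$ with $x^p = e$ for all $x \in G$, which is exactly the definition of an elementary abelian $p$-group; being finite, $G \cong \mathbb{Z}_p^n$ for some $n$. Combining both directions with Theorem \ref{9} finishes the proof.

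This is essentially routine, so I do not anticipate a real obstacle; the only point requiring a little care is the order-of-a-product computation $o(xy) = o(x)o(y)$ when $\gcd(o(x),o(y))=1$ in an abelian group, which is standard. One could alternatively phrase the whole argument group-theoretically by noting that a finite abelian group is a direct product of cyclic $p$-groups and that such a product has all elements of prime order iff each factor is $\mathbb{Z}_p$ for one fixed $p$; but the product-order argument above is cleaner and self-contained given Theorem \ref{9}.
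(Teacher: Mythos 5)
Your proposal is correct and follows essentially the same route as the paper: invoke Theorem \ref{9} to reduce to the condition that every non-identity element has prime order, then rule out two distinct primes $p\neq q$ among element orders by observing that in an abelian group an element $x$ of order $p$ and an element $y$ of order $q$ yield $o(xy)=pq$, a contradiction. Your write-up is in fact slightly more complete than the paper's, since you also spell out the easy converse direction, which the paper leaves implicit.
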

\begin{proof}
Suppose $OD(G)$ is a star graph. Then by Theorem \ref{9}, every non-identity element of  $G$ has prime order. Let $a,b \in G$ such that $o(a)$ and $o(b)$ are distinct primes.  Since $G$  is abelian,  so $o(ab)=o(a)o(b)$, which is a contradiction.  Hence $G$ is an elementary abelian group (abelian $p$-group of exponent $p$ for some prime number $p$).
\end{proof}

\begin{remark}\label{1}
If the group $G$ is not abelian, then above Corollary \ref{13} fails. For example, the order divisor graph $OD(D_3)$ of the dihedral group $D_3=<a,b$ $|$ $ a^3=b^2=(ab)^2=e>$ is a star graph but $D_3$ is not elementary abelian.
\end{remark}

\begin{figure}[h!]
   \centering
     \includegraphics[width=0.4\textwidth]{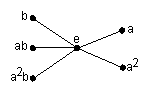}
     \caption{$OD(D_3)\cong S_6$}
\end{figure}

\begin{corollary}\label{11}
$OD\big(U(\mathbb{Z}_n)\big)$ is a star graph $S_{\phi(n)}$ if and only if $n \mid 24$.
\end{corollary}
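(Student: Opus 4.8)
The plan is to reduce the statement to a classical fact about the unit group $U(\mathbb{Z}_n)$ by way of Corollary \ref{13}. Since $U(\mathbb{Z}_n)$ is abelian, Corollary \ref{13} says that $OD\big(U(\mathbb{Z}_n)\big)$ is a star graph if and only if $U(\mathbb{Z}_n)$ is elementary abelian; and when it is a star, the number of vertices is $|U(\mathbb{Z}_n)| = \phi(n)$, so the star is necessarily $S_{\phi(n)}$. (For $n \le 2$ the group $U(\mathbb{Z}_n)$ is trivial and $OD\big(U(\mathbb{Z}_n)\big) = S_1$, which is consistent with $n \mid 24$.) Thus it suffices to prove: for $n \ge 3$, the group $U(\mathbb{Z}_n)$ is elementary abelian if and only if $n \mid 24$.

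First I would note that for $n \ge 3$ the order $\phi(n)$ of $U(\mathbb{Z}_n)$ is even, so if $U(\mathbb{Z}_n)$ is elementary abelian it must be an elementary abelian $2$-group; equivalently, $U(\mathbb{Z}_n)$ has exponent dividing $2$, i.e. $a^2 \equiv 1 \pmod n$ for every $a$ with $\gcd(a,n) = 1$. For the ``if'' direction, I would write $n = 2^{a}3^{b}$ with $a \le 3$ and $b \le 1$ (these being exactly the divisors of $24$), use the Chinese Remainder Theorem to get $U(\mathbb{Z}_n) \cong U(\mathbb{Z}_{2^a}) \times U(\mathbb{Z}_{3^b})$, and check each factor directly: $U(\mathbb{Z}_1)$ and $U(\mathbb{Z}_2)$ are trivial, $U(\mathbb{Z}_3) \cong U(\mathbb{Z}_4) \cong \mathbb{Z}_2$, and $U(\mathbb{Z}_8) \cong \mathbb{Z}_2 \times \mathbb{Z}_2$. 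A direct product of groups of exponent dividing $2$ again has exponent dividing $2$, hence is elementary abelian.

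For the ``only if'' direction I would argue contrapositively. If $n \nmid 24$, then $n$ has a prime divisor $p \ge 5$, or $16 \mid n$, or $9 \mid n$. In the first case $U(\mathbb{Z}_p) \cong \mathbb{Z}_{p-1}$ is cyclic of even order $p-1 \ge 4$, so there is a residue mod $p$ whose square is not $\equiv 1 \pmod p$; in the second case $3^2 = 9 \not\equiv 1 \pmod{16}$; in the third case $2^2 = 4 \not\equiv 1 \pmod 9$. Lifting the offending residue through the Chinese Remainder Theorem to a unit $a$ modulo $n$ yields $a^2 \not\equiv 1 \pmod n$, so $U(\mathbb{Z}_n)$ is not elementary abelian, and hence $OD\big(U(\mathbb{Z}_n)\big)$ is not a star graph by Corollary \ref{13}.

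The main (though still elementary) obstacle is precisely this last casework, together with having the structure of $U(\mathbb{Z}_{p^k})$ on hand; everything else follows immediately from Corollary \ref{13} and the parity of $\phi(n)$.
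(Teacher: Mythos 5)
Your proposal is correct and follows essentially the same route as the paper: both reduce via Corollary \ref{13} to showing that $U(\mathbb{Z}_n)$ has exponent $2$ exactly when $n \mid 24$, and both settle this by splitting $n$ into its $2$-part and odd part via the Chinese Remainder Theorem. The only cosmetic difference is that the paper argues the forward direction directly (from $2^2 \equiv 1$ on the odd part and $3^2 \equiv 1$ on the $2$-part it deduces that the odd part divides $3$ and the $2$-part divides $8$, hence $n \mid 24$), whereas you argue the contrapositive by enumerating the ways $n$ can fail to divide $24$ and exhibiting a unit of order greater than $2$ in each case.
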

\begin{proof}
Suppose $OD(U(\mathbb{Z}_n)) \cong S_{\phi(n)}$, where $\phi$ is the Euler's phi function. Then by Corollary \ref{13},  $U(\mathbb{Z}_n)$ is an elementary abelian group (abelian $p$-group of exponent $p$ for some prime number $p$).  Let $\phi(n) >1, i.e., n >2$ (If $\phi(n)=1$, then $U(\mathbb{Z}_n)$ is trivial). Then $U(\mathbb{Z}_n)$ has at least two elements that satisfy $x^2=e$. Hence every non-identity element of $U(\mathbb{Z}_n)$ has order $2$. Case 1: If $n$ is odd. Then $\bar{2} \in U(\mathbb{Z}_n)$ and so $n \mid 2^2-1=3$. Hence $n=1$ or $3$. Case 2: If $n=2^t$ for some $t \geq 1$. Then $ \bar{3} \in U(\mathbb{Z}_n)$ and so $n \mid 3^2-1=8$. This implies $n=1,2,4$ or $8$. Case 3: If $n$ is any arbitrary positive integer. Then $n=2^tk$, where $k$ is odd and $t$ is non-negative integer. Since $U(\mathbb{Z}_n) \cong U(\mathbb{Z}_{2^t}) \times U(\mathbb{Z}_k)$, so every non-identity element of both  $U(\mathbb{Z}_{2^t})$ and $U(\mathbb{Z}_{k})$ has order $2$.  By above cases $1$ and $2$, we get that  $2^t \in \{ 1,2,4,8\}$ and $k \in \{1,3\}$. Hence $n=2^tk \in \{1,2, 4, 6,8,12,24\}$.

Conversely, it is easy to check that if $n \mid 24$ then every  non-identity element of $U(\mathbb{Z}_n)$ has order $2$, i.e., $OD(U(\mathbb{Z}_n)) \cong S_{\phi(n)}$.
\end{proof}

\begin{corollary}\label{12}
$OD(\mathbb{Z}_n)$ is a star graph if and only if $n$ is prime.
\end{corollary}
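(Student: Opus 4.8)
The plan is to deduce this as a direct corollary of Corollary \ref{13}, using the fact that $\mathbb{Z}_n$ is abelian and cyclic. First I would handle the forward direction: suppose $OD(\mathbb{Z}_n)$ is a star graph. Since $\mathbb{Z}_n$ is abelian, Corollary \ref{13} gives that $\mathbb{Z}_n$ is elementary abelian, i.e., a $p$-group of exponent $p$ for some prime $p$. But a cyclic group $\mathbb{Z}_n$ is a $p$-group only when $n$ is a prime power $p^k$, and it has exponent $p$ only when $k=1$; hence $n=p$ is prime. Equivalently, one can argue directly from Theorem \ref{9}: if $OD(\mathbb{Z}_n)$ is a star graph then every non-identity element of $\mathbb{Z}_n$ has prime order, but $\mathbb{Z}_n$ contains a generator of order $n$, forcing $n$ to be prime.

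For the converse, if $n=p$ is prime then every non-identity element of $\mathbb{Z}_p$ has order $p$, so by Theorem \ref{9} (or immediately from the definition, since no two non-identity elements have orders dividing one another), $OD(\mathbb{Z}_p)$ is a star graph $S_p$ with the identity as its center.

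There is no real obstacle here; the statement is essentially an immediate specialization of the two preceding results, and the only thing to be careful about is noting that a cyclic group that is elementary abelian must have prime order. I would keep the proof to two or three sentences invoking Corollary \ref{13} and the structure of cyclic $p$-groups.

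\begin{proof}
Since $\mathbb{Z}_n$ is abelian, by Corollary \ref{13} the graph $OD(\mathbb{Z}_n)$ is a star graph if and only if $\mathbb{Z}_n$ is elementary abelian, i.e., a $p$-group of exponent $p$ for some prime $p$. A cyclic group $\mathbb{Z}_n$ is a $p$-group precisely when $n$ is a power of $p$, and it has exponent $p$ precisely when $n=p$. Hence $OD(\mathbb{Z}_n)$ is a star graph if and only if $n$ is prime, in which case $OD(\mathbb{Z}_n)\cong S_n$ with the identity element as its center.
\end{proof}
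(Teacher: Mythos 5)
Your proof is correct; the paper states this corollary without proof, evidently regarding it as an immediate consequence of Corollary \ref{13} (or Theorem \ref{9}), which is exactly the derivation you give. The only point of substance --- that a cyclic elementary abelian group must have prime order, equivalently that the generator of $\mathbb{Z}_n$ has order $n$ and so $n$ must be prime by Theorem \ref{9} --- is handled correctly in your argument.
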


Recall \cite{G2} that for any two elements $a,b$ of a group $G$, $[a,b]$ denote the {\em commutator} $a^{-1}b^{-1}ab$. If $A,B$ are subsets of a group $G$ then $[A,B]=<[a,b] \mid a \in A, b \in B>$. Particularly, $[G,G]$ is called {\em commutator subgroup} denoted by $G'$. A group $G$ is called {\em nilpotent group} if and only if it is the direct product of its Sylow subgroups. The unique maximal nilpotent normal subgroup of a group $G$ is called {\em Fitting subgroup} of $G$ and is denoted by $F(G)$.
\begin{corollary}\label{10}
The order divisor graph $OD(G)$ is a star graph if and only if one of the  following cases occur:
\begin{enumerate}
\item  $G$ is $p$-group of exponent $p$.
\item
\begin{enumerate}
\item[{\em(a)}] $|G|=p^aq,~ 3 \leq p < q,~ a \geq 3,~ |F(G)|=p^{a-1},~ |G:G'|=p$.
\item[{\em(b)}] $|G|=p^aq,~ 3 \leq p < q,~ a \geq 1,~ |F(G)|=|G'|=p^{a}$.
\item[{\em(c)}] $|G|=2^ap,~ p\geq 3,~ a \geq 2,~ |F(G)|=|G'|=2^{a}$.
\item[{\em(d)}] $|G|=2p^a,~ p\geq 3,~ a \geq 1,~ |F(G)|=|G'|=p^{a}$ and $F(G)$ is elementary abelian.
\end{enumerate}

\item  $G\cong \mathcal{A}_5$.

\end{enumerate}
\end{corollary}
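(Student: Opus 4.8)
The plan is to reduce the entire statement to Theorem~\ref{9}. By that theorem, $OD(G)$ is a star graph if and only if every non-identity element of $G$ has prime order, so the corollary is precisely the classical classification of finite groups all of whose non-trivial elements have prime order; cases (1)--(3) are exactly the list that classification produces. Accordingly I would split the argument into the nilpotent case (which yields (1)) and the non-nilpotent case (which yields (2) and (3)).

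First I would dispose of the nilpotent case. A nilpotent group is the direct product of its Sylow subgroups, so if two distinct primes $p<q$ divided $|G|$ then, by Cauchy's theorem, there would be an element $x$ of order $p$ and an element $y$ of order $q$ lying in different Sylow subgroups; these commute, whence $o(xy)=pq$ is not prime, a contradiction. Hence $G$ is a $p$-group, and since $|G|=p^{n}$ every non-trivial element has order a prime power that is prime, i.e.\ order $p$; thus $G$ has exponent $p$, which is case (1). Conversely, a $p$-group of exponent $p$ clearly has all non-trivial elements of order $p$, so by Theorem~\ref{9} its order divisor graph is a star. One also checks easily that every group listed under (2) is non-nilpotent (e.g.\ in each of 2(a)--2(d) the commutator subgroup is too large to occur in a nilpotent group of that order, since a nilpotent group of order $p^{a}q$, $2^{a}p$ or $2p^{a}$ has commutator subgroup a proper subgroup of its Sylow $p$-subgroup) and that $\mathcal{A}_{5}$ is simple, so no overlap with (1) can occur.

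The heart of the proof is the non-nilpotent case, and here I would invoke the known classification of non-nilpotent finite groups in which every non-identity element has prime order, sketching only the shape of its proof. Every Sylow subgroup of such a $G$ has exponent equal to its defining prime, since a $p$-element has prime order and hence order $p$. Under the hypothesis no two elements of coprime order can commute, which forces distinct Sylow subgroups to intersect trivially and imposes a Frobenius-type rigidity on normalizers; from this one deduces that $F(G)$ is a $p$-group of exponent $p$ for one of the primes $p$ dividing $|G|$, and that $|\pi(G)|=2$ unless $G$ is exceptional. When $\pi(G)=\{p,q\}$ one gets $|G|=p^{a}q$, and a case analysis of how a Sylow $q$-subgroup acts — tracking $F(G)$ and $G'$ — yields precisely the four configurations 2(a)--2(d), including the refinement in 2(d) that $F(G)$ is elementary abelian. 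When $|\pi(G)|\ge 3$, an argument that ultimately rests on Thompson's classification of minimal simple groups (equivalently on the classification of finite simple groups) eliminates every possibility except $G\cong\mathcal{A}_{5}$, whose element orders are $1,2,3,5$ — all prime. For the converse, each group in 2(a)--2(d) is seen from its structure to have all element orders dividing $p$ or $q$ (and $\mathcal{A}_{5}$ has all element orders prime by inspection), so Theorem~\ref{9} returns that $OD(G)$ is a star graph.

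The main obstacle is exactly this non-nilpotent analysis: pinning down the invariants $|F(G)|$, $|G:G'|$ and the elementary-abelian condition that separate 2(a) from 2(b)--2(d), and proving that $\mathcal{A}_{5}$ is the only non-solvable example, is genuinely deep, which is why I would rely on the established classification rather than reprove it. By contrast, the graph-theoretic content of the corollary is essentially free once Theorem~\ref{9} is available.
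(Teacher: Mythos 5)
Your proposal is correct and follows essentially the same route as the paper: reduce via Theorem~\ref{9} to the statement that every non-identity element of $G$ has prime order, and then invoke Deaconescu's classification of such groups (the paper's entire proof is ``Apply \cite[Main Theorem]{D}''). Your additional sketch of the nilpotent/non-nilpotent split and of the internals of that classification is consistent with the cited result, so there is nothing further to compare.
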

\begin{proof}
Apply \cite[Main Theorem]{D}.
\end{proof}

\begin{theorem}\label{4}
 The order divisor graph of the dihedral group $D_n$ $(n \geq 3)$ is a star graph $S_{2n}$ if and only if $n$ is  prime.
\end{theorem}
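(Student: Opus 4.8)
The plan is to reduce the whole statement to Theorem~\ref{9}. That theorem says $OD(G)$ is a star graph if and only if every non-identity element of $G$ has prime order, and since the vertex set of $OD(D_n)$ is $D_n$ itself with $|D_n| = 2n$, any star on it is necessarily $S_{2n}$. So it suffices to show: every non-identity element of $D_n$ has prime order $\iff$ $n$ is prime.

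For the ``if'' direction I would take $n = p$ prime and simply catalogue the element orders of $D_p = \langle a,b \mid a^p = b^2 = (ab)^2 = e\rangle$. The subgroup $\langle a\rangle$ is cyclic of order $p$, so each non-identity rotation $a^i$ ($1 \le i \le p-1$) has order $p/\gcd(i,p) = p$; the remaining $p$ elements are the reflections $a^i b$, each of which squares to $e$ and is non-identity, hence has order $2$. Thus every non-identity element of $D_p$ has order $2$ or $p$, both prime, and Theorem~\ref{9} gives that $OD(D_p)$ is a star, necessarily $S_{2p}$.

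For the ``only if'' direction I would argue by contraposition: if $n$ is not prime then, since $n \ge 3$, $n$ is composite, and the generator $a$ is a non-identity element of order exactly $n$, which is not prime; by Theorem~\ref{9}, $OD(D_n)$ is then not a star graph, hence not $S_{2n}$. Combining the two directions yields the claimed equivalence. There is no genuine obstacle here — the argument is a direct application of Theorem~\ref{9} — and the only point needing care is the elementary bookkeeping that $D_n$ consists of the cyclic rotation subgroup $\langle a\rangle$ of order $n$ together with $n$ reflections, all of order $2$.
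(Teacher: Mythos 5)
Your proof is correct, and it takes a somewhat different route from the paper's. The paper proves the ``only if'' direction from scratch inside the rotation subgroup: using $o(a^k)=n/\gcd(k,n)$, it observes that if some power $a^k$ had order different from $o(a)=n$, then $o(a^k)$ would divide $n$ and create an edge between $a^k$ and $a$, forcing $\gcd(k,n)=1$ for all $1\le k\le n-1$ and hence $n$ prime; it never invokes Theorem~\ref{9}. You instead route everything through Theorem~\ref{9}, which collapses the ``only if'' direction to the one-line observation that the single element $a$ has composite order $n$ when $n$ is not prime (and $n\ge 3$ guarantees composite here). Your reduction is legitimate and more economical, since Theorem~\ref{9} has already done the combinatorial work; the paper's version is self-contained and, as a by-product, exhibits exactly which pair of vertices ($a$ and a suitable $a^k$) witnesses the failure of the star property. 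The ``if'' directions are essentially identical in both arguments: a catalogue showing rotations have order $n=p$ and reflections have order $2$. Your remark that any star on the vertex set $D_n$ is automatically $S_{2n}$ correctly handles the labelling of the star, which the paper leaves implicit.
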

\begin{proof}
$D_n=<a,b \mid a^n=b^2=(ab)^2=e>=\{e,a,a^2,..a^{n-1}, b, ab, a^2b,..., a^{n-1}b\}$.

Suppose $OD(D_n)$ is a star graph. So, every pair of vertices corresponding to non-identity elements is non-adjacent. Therefore, if $o(a^i) \neq o(a^j)$ for some $i,j \in \{1,2,3...n-1\}$, then $o(a^i) \nmid o(a^j)$ and $o(a^j) \nmid o(a^i)$. Note that $o(a^k)=\frac{n}{gcd(k,n)}$ for all $k \in \{1,2,3...n-1\}$. Therefore, if $o(a) \neq o(a^k)$ for some $k \in  \{1,2,3...n-1\}$, then  $o(a^k)$ divides $o(a)$ and so $a$ and $a^k$ will become adjacent, which is not possible. Hence $o(a)=o(a^k)$ for all $k \in  \{1,2,3...n-1\}$. This implies that $gcd(k,n)=1$ for all $k \in  \{1,2,3...n-1\}$. Hence $n$ is prime.

Conversely, Suppose that $n$ is prime. Then $o(a^i)=o(a)=n$ for all $ i \in  \{1,2,3...n-1\}$. Moreover, $o(a^ib)=2$ for all $i \in  \{1,2,3...n-1\}$. Hence $OD(D_n)$ is a star graph.
\end{proof}

\begin{theorem}\label{2}
Let $G$ be a (finite) $p$-group of order $p^n$. Then order divisor graph $OD(G)$ is a complete multi-partite graph.
\end{theorem}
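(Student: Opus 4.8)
The plan is to exhibit the partition of the vertex set into its parts explicitly and check the two defining properties of a complete multipartite graph. First I would recall the basic fact about $p$-groups: if $|G|=p^n$, then by Lagrange's theorem the order of every element divides $p^n$, so $o(g)\in\{p^0,p^1,\dots,p^n\}$ for each $g\in G$. Let $I=\{\,i\in\{0,1,\dots,n\}: G$ has an element of order $p^i\,\}$ (note $0\in I$ always, and $1\in I$ whenever $G$ is nontrivial), and for $i\in I$ set $V_i=\{\,g\in G: o(g)=p^i\,\}$. Then $\{V_i\}_{i\in I}$ is a partition of $V(OD(G))=G$ into nonempty classes.

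Next I would verify that each $V_i$ is an independent set: any two distinct vertices lying in the same $V_i$ have equal order, and the definition of $OD(G)$ requires the two endpoints of an edge to have \emph{different} orders, so no edge can join two vertices of $V_i$. Then I would verify that the graph contains every edge between distinct parts: take $i\ne j$ in $I$, $a\in V_i$, $b\in V_j$, so $o(a)=p^i\ne p^j=o(b)$; writing $m=\min(i,j)$ and $M=\max(i,j)$ we have $p^m\mid p^M$, hence $o(a)\mid o(b)$ or $o(b)\mid o(a)$, and therefore $a$ and $b$ are adjacent. Combining these two observations, $OD(G)$ is exactly the complete multipartite graph with parts $V_i$, $i\in I$; in particular the number of parts equals the number of distinct element orders occurring in $G$.

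There is no serious obstacle here: the only thing to be careful about is the indexing of the parts and the degenerate cases (if $|G|=1$ the graph is a single vertex, i.e.\ ``complete $1$-partite''; if $|G|=p$ there are exactly two parts, recovering a star, consistent with Theorem~\ref{9}). The entire argument rests on the single structural feature that drives it—divisibility is a \emph{total} order on $\{p^0,p^1,\dots,p^n\}$—so that ``distinct orders'' and ``adjacent'' coincide for distinct vertices of a $p$-group. This is also exactly the point where the hypothesis that $G$ is a $p$-group is used, and it is why the analogous statement fails for general finite groups (where two prime orders $p\ne q$ are incomparable).
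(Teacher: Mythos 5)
Your proposal is correct and follows essentially the same route as the paper: partition $G$ into the sets of elements of a fixed order $p^i$, observe each class is independent because edges require distinct orders, and use that powers of $p$ are totally ordered by divisibility to get all edges between distinct classes. The only cosmetic difference is that you index only the nonempty classes and discuss degenerate cases explicitly, while the paper lists all $A_0,\dots,A_n$ and remarks that some may be empty.
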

\begin{proof} Let $p$ be a prime and $A_i=\{ x \in G \mid o(x)=p^i\}$. We can write $G=A_0 \cup A_1 \cup \cdots \cup A_n$. It is not necessary that $G$ has elements of order $p^i$ for $i >1$ and by Cauchy's theorem, $G$ must have element of order $p$. Therefore, for some $i>1$,  $A_i$ may be empty but $A_0$ and $A_1$ must be nonempty in the union $A_0 \cup A_1 \cup \cdots \cup A_n$.
Note that $ab \not \in E(OD(G))$  if $a,b \in A_i$ for some $0\leq i \leq n$. Suppose $a \in A_i$ and $b \in A_j$, where $i \neq j$. Then $o(a)=p^i$ and $o(b)=p^j$. If $i < j$ then $p^i$ strictly divides $p^j$ and if $i > j$ then $p^j$ strictly divides $p^i$. Therefore $ab \in E(OD(G))$ and hence $OD(G)$ is complete multipartite graph.
\end{proof}

\begin{example}
\end{example}
$\\$
\begin{figure}[h!]
   \centering
     \includegraphics[width=0.4\textwidth]{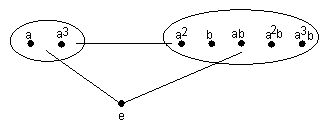}
     \caption{$OD(D_4)$}
     The line joining two independent sets of vertices means that each vertex in one independent set is adjacent to each vertex of other independent set.
\end{figure}
$\\$

\begin{corollary}\label{3}
Let $G$ be a finite cyclic group of order $p^n$.  Then $OD(G)$ is isomorphic to the complete $(n+1)$-partite graph $K_{1, p-1,  p(p-1), p^2(p-1) ,..., p^{n-1}(p-1)}$.
\end{corollary}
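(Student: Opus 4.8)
The plan is to invoke Theorem \ref{2} and then simply count. Since $G$ is a $p$-group of order $p^n$, Theorem \ref{2} tells us that $OD(G)$ is a complete multi-partite graph whose parts are exactly the nonempty sets $A_i=\{x\in G \mid o(x)=p^i\}$ for $0\leq i\leq n$. So the only work left is to show that for a \emph{cyclic} group of order $p^n$ every $A_i$ is nonempty, and to compute $|A_i|$.

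First I would recall the standard fact that a finite cyclic group of order $m$ has, for each divisor $d$ of $m$, exactly $\phi(d)$ elements of order $d$; in particular there is at least one element of each such order. Applying this with $m=p^n$ and $d=p^i$ for $0\leq i\leq n$ gives that each $A_i$ is nonempty and that $|A_0|=\phi(1)=1$, while for $1\leq i\leq n$ we have $|A_i|=\phi(p^i)=p^i-p^{i-1}=p^{i-1}(p-1)$. Hence the $n+1$ parts $A_0,A_1,\dots,A_n$ have sizes $1,\,p-1,\,p(p-1),\,p^2(p-1),\dots,p^{n-1}(p-1)$ respectively.

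Finally I would combine these two observations: $OD(G)$ is complete multi-partite on parts of precisely these sizes, which is by definition the graph $K_{1,\,p-1,\,p(p-1),\,p^2(p-1),\dots,p^{n-1}(p-1)}$. One can also note as a sanity check that $1+\sum_{i=1}^{n}p^{i-1}(p-1)=1+(p-1)\frac{p^n-1}{p-1}=p^n=|G|$, so the parts do account for all vertices. There is essentially no real obstacle here; the only point requiring care is the count of elements of each order, which is exactly where cyclicity (as opposed to the general $p$-group case of Theorem \ref{2}) is used, and that is a routine fact about cyclic groups.
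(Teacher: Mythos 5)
Your proof is correct and follows essentially the same route as the paper: the paper's own proof simply re-runs the adjacency argument of Theorem \ref{2} inline and then counts $|A_i|=\phi(p^i)=p^{i-1}(p-1)$ via the standard fact about element orders in cyclic groups, exactly as you do. Citing Theorem \ref{2} directly rather than repeating its argument is a minor (and arguably cleaner) presentational difference, not a different method.
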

\begin{proof}Let $A_i=\{ x \in G : o(x)=p^i\}$. We can write $G=A_0 \cup A_1 \cup \cdots \cup A_n$. Since $G$ is cyclic, so by \cite[Theorem 4.4]{G}, $|A_i| = \phi(p^i)=p^{i-1}(p-1)$, where $\phi$ is {\em Euler's phi function}. Note that $ab \not \in E(OD(G))$  if $a,b \in A_i$ for any $0\leq i \leq n$. Suppose $a \in A_i$ and $b \in A_j$, where $i \neq j$. Then $o(a)=p^i$ and $o(b)=p^j$. If $i < j$ then $p^i$ strictly divides $p^j$ and if $i > j$ then $p^j$ strictly divides $p^i$. Therefore $ab \in E(OD(G))$ and hence $OD(G)$ is isomorphic to the complete $(n+1)$-partite graph $K_{1, p-1,  p(p-1), p^2(p-1) ,..., p^{n-1}(p-1)}$. \end{proof}
$\\$
\begin{corollary}\label{6}
If $G$ is a finite cyclic group of order $p^n$, then $\chi(OD(G))=n+1$.
\end{corollary}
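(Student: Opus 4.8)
The plan is to deduce this directly from Corollary \ref{3}. By that result, $OD(G)$ is isomorphic to the complete $(n+1)$-partite graph $K_{1,\,p-1,\,p(p-1),\,\ldots,\,p^{n-1}(p-1)}$, so it suffices to establish the general fact that the chromatic number of a complete $k$-partite graph equals $k$, and then set $k=n+1$.

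First I would prove the upper bound $\chi(OD(G))\le n+1$. Writing $G=A_0\cup A_1\cup\cdots\cup A_n$ with $A_i=\{x\in G : o(x)=p^i\}$ as in the proof of Corollary \ref{3}, note that each $A_i$ is an independent set of $OD(G)$, since two elements of the same order are never adjacent. Assigning a distinct color to each nonempty part $A_i$ therefore gives a proper coloring using at most $n+1$ colors. Next I would prove the lower bound $\chi(OD(G))\ge n+1$. For each $i$ choose $x_i\in A_i$ (each $A_i$ is nonempty since $|A_i|=\phi(p^i)\ge 1$); whenever $i\ne j$ the orders $p^i$ and $p^j$ are comparable under divisibility, so $x_i$ and $x_j$ are adjacent. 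Hence $\{x_0,x_1,\ldots,x_n\}$ induces a clique of size $n+1$ in $OD(G)$, forcing $\chi(OD(G))\ge n+1$. Combining the two bounds yields $\chi(OD(G))=n+1$.

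There is essentially no obstacle here: the statement is an immediate consequence of the explicit complete multipartite structure already recorded in Corollary \ref{3}, together with the elementary and well-known evaluation of the chromatic number of a complete multipartite graph. The only point worth stating carefully is that all $n+1$ parts are genuinely nonempty, which is guaranteed by $|A_i|=\phi(p^i)$ from \cite[Theorem 4.4]{G}.
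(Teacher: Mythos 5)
Your proof is correct and follows exactly the route the paper intends: the paper states this as an immediate corollary of Corollary \ref{3} (giving no separate proof), and your argument simply makes explicit the standard fact that a complete $(n+1)$-partite graph with all parts nonempty has chromatic number $n+1$, via the part-coloring upper bound and the transversal-clique lower bound.
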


\begin{example}The order divisor graph $OD(\mathbb{Z}_8)$ is shown below.
\end{example}
\begin{figure}[h!]
   \centering
     \includegraphics[width=0.5\textwidth]{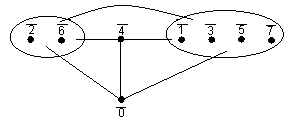}
     \caption{$OD(\mathbb{Z}_8)$}
     The line joining two independent sets of vertices means that each vertex in one independent set is adjacent to each vertex of other independent set.
\end{figure}

We denote by $G_1 \diamond G_2 \diamond \cdots \diamond G_k$  the sequential join of graphs $G_1,G_2,...,G_k$, where $G_i \diamond G_{i+1}=G_i \vee G_{i+1}$ for all $1 \leq i \leq k-1$, i.e., by adding an edge from each vertex of $G_i$ to each vertex of $G_{i+1}$, $1 \leq i \leq k-1$.

\begin{theorem}\label{5}
Let $G$ be a cyclic group of order $p_1p_2$, where $p_1, p_2$ are distinct primes. Then $OD(G)$ is a sequential join $(G_1 \diamond G_2 \diamond G_3)\diamond  K_1$,   where $G_1 \cong (p_1-1)K_1$,  $G_2 \cong (p_1-1)(p_2-1)K_1$ and $G_3 \cong (p_2-1)K_1$.
\end{theorem}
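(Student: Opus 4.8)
The plan is to partition the vertex set of $OD(G)$ according to element order, exactly as in the proofs of Theorems \ref{2} and Corollary \ref{3}, and then identify each block with one of the named graphs $G_1,G_2,G_3,K_1$. Since $G$ is cyclic of order $p_1p_2$ with $p_1\neq p_2$ prime, the possible element orders are $1$, $p_1$, $p_2$, and $p_1p_2$. By \cite[Theorem 4.4]{G}, the number of elements of order $d$ is $\phi(d)$, so I get exactly one element of order $1$ (the identity), $\phi(p_1)=p_1-1$ elements of order $p_1$, $\phi(p_2)=p_2-1$ elements of order $p_2$, and $\phi(p_1p_2)=(p_1-1)(p_2-1)$ elements of order $p_1p_2$. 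Label these blocks $B_1$ (order $p_1$), $B_2$ (order $p_1p_2$), $B_3$ (order $p_2$), and $B_0=\{e\}$, so that as edgeless graphs $B_1\cong(p_1-1)K_1$, $B_2\cong(p_1-1)(p_2-1)K_1$, $B_3\cong(p_2-1)K_1$, and $B_0\cong K_1$.

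Next I would check adjacency between blocks. Within a block all elements have equal order, hence no edges, so each block is indeed an independent set. For two distinct non-identity blocks: $p_1\mid p_1p_2$ and $p_2\mid p_1p_2$, so every vertex of $B_1$ is adjacent to every vertex of $B_2$, and every vertex of $B_3$ is adjacent to every vertex of $B_2$; on the other hand $p_1\nmid p_2$ and $p_2\nmid p_1$ (distinct primes), so no vertex of $B_1$ is adjacent to any vertex of $B_3$. Finally, the identity is adjacent to everything (Remark, part (ii)). Putting this together, the join structure is precisely $B_1$ completely joined to $B_2$, $B_2$ completely joined to $B_3$, $B_2$ (and $B_1$, $B_3$) completely joined to $B_0$, with no $B_1$–$B_3$ edges — which is exactly the sequential join $(G_1\diamond G_2\diamond G_3)\diamond K_1$ with $G_1\cong(p_1-1)K_1$, $G_2\cong(p_1-1)(p_2-1)K_1$, $G_3\cong(p_2-1)K_1$, once one observes that the outer $\diamond K_1$ attaches $K_1$ to every vertex of $G_1\diamond G_2\diamond G_3$, matching the fact that $e$ is universal.

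The only subtle point — and the place where one must be a little careful rather than the place where there is a genuine obstacle — is reconciling the notation $(G_1\diamond G_2\diamond G_3)\diamond K_1$ with the defined meaning of sequential join: in $G_1\diamond G_2\diamond G_3$ only consecutive blocks are joined, so there are no edges between $G_1$ and $G_3$, which is correct; and when we then form $H\diamond K_1$ with $H=G_1\diamond G_2\diamond G_3$, the convention ``add an edge from each vertex of $H$ to each vertex of $K_1$'' gives the universal vertex. So I would state explicitly that the $K_1$ summand plays the role of the identity vertex and that the absence of a $G_1$–$G_3$ edge corresponds to $p_1\nmid p_2$. With the vertex counts from Euler's function and the three adjacency checks (within blocks, $B_1$–$B_2$ and $B_2$–$B_3$ comparable, $B_1$–$B_3$ incomparable, $B_0$ universal), the isomorphism is immediate. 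I expect no real difficulty; the proof is essentially a bookkeeping argument of the same flavor as Corollary \ref{3}.
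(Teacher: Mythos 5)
Your proof is correct and follows essentially the same route as the paper: partition $G$ by element order into $A_{p_1}$, $A_{p_1p_2}$, $A_{p_2}$, $A_1$, count each block with Euler's function via \cite[Theorem 4.4]{G}, and verify the block adjacencies ($p_1, p_2 \mid p_1p_2$; $p_1 \nmid p_2$; identity universal). Your explicit remark reconciling the sequential-join notation with the absence of $G_1$--$G_3$ edges is a small clarification the paper leaves implicit, but the argument is the same.
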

\begin{proof} The divisors of $p_1p_2$ are $1, p_1, p_2, p_1p_2$. We make a partition of the vertex set  $G$ as $G=A_{p_1} \cup A_{p_1p_2} \cup A_{p_2}\cup A_{1}$, where $A_{p_1}=\{ x \in G : o(x)=p_1\}$, $A_{p_1p_2}=\{ x \in G : o(x)=p_1p_2\}$,  $A_{p_2}=\{ x \in G : o(x)=p_2\}$ and $A_{1}=\{ x \in G : o(x)=1\}$. By \cite[Theorem 4.4]{G}, $|A_{p_1}| = \phi(p_1)=p_1 -1$, $|A_{p_1p_2}|= \phi(p_1p_2)=(p_1 -1)(p_2-1)$, $|A_{p_2}|= \phi(p_2)=(p_2-1)$ and $|A_{1}|=1$, where $\phi$ is the Euler's phi function. Since $p_1$ strictly divides $p_1p_2$, so each vertex in $A_{p_1}$ is adjacent to  each vertex in $A_{p_1p_2}$. Similarly, as $p_2$ strictly divides $p_1p_2$, so each vertex in $A_{p_2}$ is adjacent to each vertex in $A_{p_1p_2}$. Note that $p_1 \nmid p_2$ and $p_2 \nmid p_1$, so no vertex of $A_{p_1}$ is adjacent to any vertex of $A_{p_2}$ and no vertex of $A_{p_2}$ is adjacent to any vertex of $A_{p_1}$. Clearly, the single vertex in $A_1$, i.e  corresponding to the identity element of $G$,  is adjacent to every vertex in $A_{p_1}$, every element in $A_{p_1p_2}$ and every element in $A_{p_2}$. Also note that the vertices in $A_{p_1}$, $A_{p_1p_2}$ and $A_{p_2}$ are independent. Hence  $OD(G)=(G_1+G_2+G_3)+ K_1$, where $G_1 \cong (p_1-1)K_1$,  $G_2 \cong (p_1-1)(p_2-1)K_1$ and $G_3 \cong (p_2-1)K_1$.
\end{proof}
$\\$
\begin{example}
\end{example}
\begin{figure}[h!]
   \centering
     \includegraphics[width=0.5\textwidth]{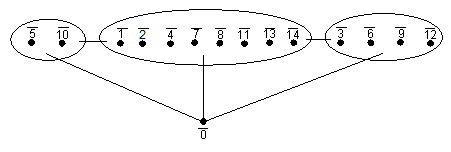}
     \caption{$OD(\mathbb{Z}_{15})$}
     The line joining two independent sets of vertices means that each vertex in one independent set is adjacent to each vertex of other independent set.
\end{figure}

\newpage

We denote by $G_1 \diamond G_2 \diamond \cdots \diamond G_k$  the sequential join of graphs $G_1,G_2,...,G_k$, where $G_i \diamond G_{i+1}=G_i \vee G_{i+1}$ for all $1 \leq i \leq k-1$, i.e., by adding an edge from each vertex of $G_i$ to each vertex of $G_{i+1}$, $1 \leq i \leq k-1$.

\begin{theorem}\label{7}
Let $H$ be a cyclic group of order $p_1p_2p_3$, where $p_1, p_2$ and $p_3$ are distinct primes. Then $OD(H)$ is  defined as:\\ $$\bigg ( \Big((G_1 \diamond G_{12})\cup(G_{12}\diamond G_{2}) \cup (G_2 \diamond G_{23}) \cup (G_{23} \diamond G_3) \cup (G_{3} \diamond G_{13})\cup (G_{13}\diamond G_{1}) \Big) \diamond G_{123}\bigg) \diamond K_1$$  where  $G_i \cong (p_i-1)K_1$, $1 \leq i \leq3 $, $G_{12} \cong (p_1-1)(p_2-1)K_1$, $G_{23}\cong (p_2-1)(p_3-1)K_1$, $G_{13}\cong (p_1-1)(p_3-1)K_1$, and $G_{123}\cong (p_1-1)(p_2-1)(p_3-1)K_1$.\\\
\end{theorem}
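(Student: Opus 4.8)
The plan is to mimic the proof of Theorem \ref{5}: partition the vertex set of $OD(H)$ according to the orders of elements, record the cardinalities of the parts from the fact that $H$ is cyclic, and then read off all adjacencies from the divisibility poset of the divisors of $p_1p_2p_3$.

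First I would list the eight divisors $1,\,p_1,\,p_2,\,p_3,\,p_1p_2,\,p_1p_3,\,p_2p_3,\,p_1p_2p_3$ and write $H=A_1\cup A_{p_1}\cup A_{p_2}\cup A_{p_3}\cup A_{p_1p_2}\cup A_{p_1p_3}\cup A_{p_2p_3}\cup A_{p_1p_2p_3}$, where $A_d=\{x\in H:o(x)=d\}$. Since $H$ is cyclic, \cite[Theorem 4.4]{G} gives $|A_d|=\phi(d)$, so $|A_1|=1$, $|A_{p_i}|=p_i-1$, $|A_{p_ip_j}|=(p_i-1)(p_j-1)$, and $|A_{p_1p_2p_3}|=(p_1-1)(p_2-1)(p_3-1)$; these are exactly $K_1$, $G_i$, $G_{ij}$ (with $G_{ij}=G_{ji}$), and $G_{123}$ respectively. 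Since two distinct elements of the same order are never adjacent in $OD(H)$, each $A_d$ is an independent set, i.e.\ a union of isolated vertices, matching the stated isomorphism types.

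Next I would determine, for each pair of distinct divisors $d,d'$, whether $d\mid d'$ or $d'\mid d$ — equivalently whether every vertex of $A_d$ is joined to every vertex of $A_{d'}$. The identity divides every divisor, so $A_1$ is adjacent to all other classes; and every divisor divides $p_1p_2p_3$, so $A_{p_1p_2p_3}$ is adjacent to all the remaining classes. Among the six "middle" classes $G_1,G_{12},G_2,G_{23},G_3,G_{13}$ one checks, for $\{i,j,k\}=\{1,2,3\}$: $p_i\mid p_ip_j$, while $p_i\nmid p_j$, $p_i\nmid p_jp_k$, and $p_ip_j\nmid p_ip_k$ for $j\neq k$. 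Hence the complete-join relation restricted to these six classes is precisely the $6$-cycle $G_1-G_{12}-G_2-G_{23}-G_3-G_{13}-G_1$, which is exactly the graph $(G_1\diamond G_{12})\cup(G_{12}\diamond G_2)\cup(G_2\diamond G_{23})\cup(G_{23}\diamond G_3)\cup(G_3\diamond G_{13})\cup(G_{13}\diamond G_1)$ — here the union is taken on the common vertex set $G_1\cup G_2\cup G_3\cup G_{12}\cup G_{13}\cup G_{23}$, since the six joins share vertices.

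Finally I would assemble the pieces. Joining this $6$-cycle of independent sets to $G_{123}$ adds exactly the edges from $A_{p_1p_2p_3}$ to the six middle classes, and then joining the result to $K_1$ attaches the identity vertex to every other vertex (including those of $G_{123}$); since we have now accounted for every pair of classes, no further edges occur. This yields precisely
$$\Big(\big((G_1\diamond G_{12})\cup(G_{12}\diamond G_2)\cup(G_2\diamond G_{23})\cup(G_{23}\diamond G_3)\cup(G_3\diamond G_{13})\cup(G_{13}\diamond G_1)\big)\diamond G_{123}\Big)\diamond K_1.$$
The only substantive step is the bookkeeping in the third paragraph: verifying the six divisibilities together with the six non-divisibilities among $\{p_i\}$ and $\{p_ip_j\}$, so that the middle layer is genuinely a $6$-cycle and not a structure with extra or missing joins. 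Everything else is a direct adaptation of the argument for Theorem \ref{5}.
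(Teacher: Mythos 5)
Your proposal is correct and follows essentially the same route as the paper: partition $H$ into the order classes $A_d$, use that a cyclic group has exactly $\phi(d)$ elements of order $d$ to identify each class with the stated empty graph, and read off the complete joins from the divisibility relations among the eight divisors of $p_1p_2p_3$. Your explicit observation that the six middle classes form a $6$-cycle of joins on a shared vertex set is a slightly cleaner way of organizing the same bookkeeping the paper carries out.
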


\begin{proof} Let $H$ be a cyclic group such that $|H|=p_1p_2p_3$, where $p_1, p_2$ and $p_3$ are distinct primes. The divisors of $p_1p_2p_3$ are $1, p_1, p_2, p_3, p_1p_2, p_1p_3, p_2p_3, p_1p_2p_3$. We make a partition of the vertex set $H$, based on the divisors of $p_1p_2p_3$, as follows: $H=A_{p_1} \cup A_{p_1p_2}\cup A_{p_1p_3} \cup A_{p_2}\cup A_{p_2p_3}\cup A_{p_3}\cup A_{p_1p_2p_3}\cup A_{1}$, where $A_{i}=\{ x \in H \mid o(x)=i\}$ for $ i \in \{1, p_1, p_2, p_3, p_1p_2, p_1p_3, p_2p_3, p_1p_2p_3\}$.  By \cite[Theorem 4.4]{G}, $|A_{p_1}| = \phi(p_1)=p_1 -1$, $|A_{p_1p_2}|= \phi(p_1p_2)=(p_1 -1)(p_2-1)$, $|A_{p_1p_3}|=\phi(p_1p_3)=(p_1-1)(p_3-1)$,$|A_{p_2}|= \phi(p_2)=(p_2-1)$, $|A_{p_2p_3}|= \phi(p_2p_3)=(p_2-1)(p_3-1)$, $|A_{p_3}|=\phi(p_3)=(p_3-1)$, $|A_{p_1p_2p_3}|= \phi(p_1p_2p_3)=(p_1 -1)(p_2-1)(p_3-1)$ and $|A_{1}|=1$, where $\phi$ is the Euler's phi function. Since $p_1$ strictly divides $p_1p_2$, $p_1p_3$ and $p_1p_2p_3$, so each vertex in $A_{p_1}$ is adjacent to each vertex in $A_{p_1p_2}$, $A_{p_1p_3}$ and $A_{p_1p_2p_3}$. As $p_2$ strictly divides $p_1p_2$, $p_2p_3$ and $p_1p_2p_3$, so each vertex in $A_{p_2}$ is adjacent to each vertex in $A_{p_1p_2}$, $A_{p_2p_3}$ and $A_{p_1p_2p_3}$. Similarly $p_3$ strictly divides $p_1p_3$, $p_2p_3$ and $p_1p_2p_3$, so each vertex in $A_{p_3}$ is adjacent to each vertex in $A_{p_1p_3}$, $A_{p_2p_3}$ and $A_{p_1p_2p_3}$. Note that $p_1 \nmid{p_2,p_3}$ , $p_2 \nmid {p_1,p_3}$ and $p_3 \nmid {p_1,p_2}$, so no vertex of $A_{p_1}$ is adjacent to any vertex of $A_{p_2}$ or $A_{p_3}$, no vertex of $A_{p_2}$ is adjacent to any vertex of $A_{p_1}$ or $A_{p_3}$ and similarly no vertex of $A_{p_3}$ is adjacent to any vertex of $A_{p_1}$ or $A_{p_2}$ . Clearly, the single vertex in $A_1$, i.e.,  corresponding to the identity element of $H$,  is adjacent to every vertex in $A_{p_1}$, $A_{p_2}$, $A_{p_3}$, every vertex in $A_{p_1p_2}$, $A_{p_1p_3}$, $A_{p_2p_3}$ and every vertex in $A_{p_1p_2p_3}$. Also note that the sets of vertices $A_{p_1}$, $A_{p_2}$, $A_{p_3}$, $A_{p_1p_2}$, $A_{p_1p_3}$, $A_{p_2p_3}$ and $A_{p_1p_2p_3}$ are independent sets. Hence $OD(H)$ is defined as $\bigg ( \Big((G_1 \diamond G_{12})\cup(G_{12}\diamond G_{2}) \cup (G_2 \diamond G_{23}) \cup (G_{23} \diamond G_3) \cup (G_{3} \diamond G_{13})\cup (G_{13}\diamond G_{1}) \Big) \diamond G_{123}\bigg) \diamond K_1$.
\end{proof}$\\$
\begin{example}
\end{example}
\begin{figure}[h!]
   \centering
     \includegraphics[width=0.7\textwidth]{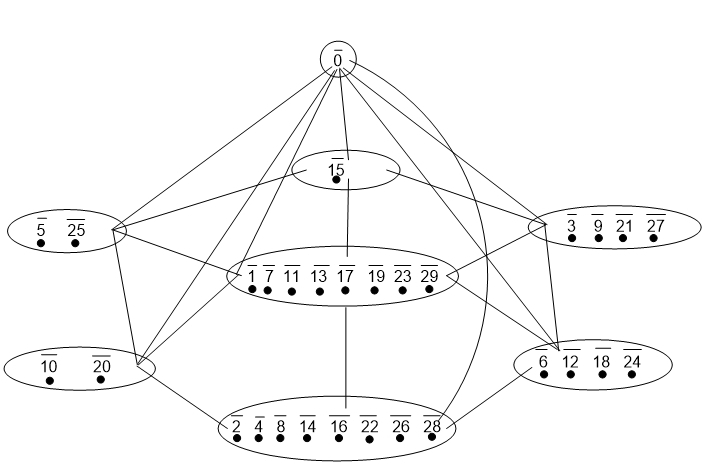}
     \caption{$OD(\mathbb{Z}_{30})$}
     The line joining two independent sets of vertices means that each vertex in one independent set is adjacent to each vertex of other independent set.
\end{figure}

\begin{definition}
 Let $n \in \mathbb{N}$ and let $D$ be the set of all $(positive)$ divisors of $n$. Define a partial order $\preceq$ on $D$ by $a \preceq b$ if
and only if $a \mid b$. Then $(D, \preceq)$ is a bounded lattice. We denote by $G_n$  the comparability graph on $(D, \preceq)$.
In other words, $G_n$ is a simple undirected graph with vertex set $D$ and two vertices $a$ and $b$ are
adjacent if and only if $a \neq  b$ and either $a \preceq b$ or $b \preceq a$. We denote by $\mathcal{E}(G_n)$ the extended graph of $G_n$ which is obtained by replacing each vertex $d$ in $G_n$ by $\phi(d)$ copies of $d$ which form an independent set.
\end{definition}

\begin{definition}
Given a graph $G= (V, E)$, the reduced graph of $G$, denoted by $\mathcal{R}(G)$,  is obtained from $G$ by merging those vertices which has  same
set of closed neighbors, where a closed neighbor of $v \in V$ is the set $\{u \in  V | uv \in E\} \cup\{v\}$.
\end{definition}

\begin{theorem}\label{8}
Let $G$ be a finite group of order $n$. The following are equivalent:
\begin{enumerate}
\item[{\em(a)}]  $G$ is cyclic;

\item[{\em(b)}]  $\mathcal{E}(G_n) \cong OD(G)$;

\item[{\em(c)}]  $G_n \cong \mathcal{R}(OD(G))$.
\end{enumerate}
\end{theorem}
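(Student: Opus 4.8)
The plan is to prove the chain of implications $(a)\Rightarrow(b)\Rightarrow(c)\Rightarrow(a)$, using the order-counting structure that already appeared in Theorems \ref{3}, \ref{5} and \ref{7}.

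\textbf{$(a)\Rightarrow(b)$.} Suppose $G$ is cyclic of order $n$. For each divisor $d\mid n$ let $A_d=\{x\in G:o(x)=d\}$; since $G$ is cyclic, $|A_d|=\phi(d)$ by \cite[Theorem 4.4]{G}, and moreover for \emph{every} divisor $d$ of $n$ the set $A_d$ is nonempty. By definition of $OD(G)$, the adjacency of $a\in A_d$ and $b\in A_e$ (with $d\neq e$) depends only on whether $d\mid e$ or $e\mid d$, i.e.\ only on whether $d$ and $e$ are comparable in $(D,\preceq)$; and no two vertices inside the same $A_d$ are adjacent. This is exactly the description of $\mathcal{E}(G_n)$: replace each vertex $d$ of the comparability graph $G_n$ by $\phi(d)$ mutually non-adjacent copies, and join two copies iff the underlying divisors are distinct and comparable. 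So the map sending each vertex of $\mathcal{E}(G_n)$ lying over $d$ to an element of $A_d$ (any bijection on each block) is a graph isomorphism $\mathcal{E}(G_n)\cong OD(G)$.

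\textbf{$(b)\Rightarrow(c)$.} Assume $\mathcal{E}(G_n)\cong OD(G)$. I claim $\mathcal{R}(\mathcal{E}(G_n))\cong G_n$ in general, which then gives $\mathcal{R}(OD(G))\cong\mathcal{R}(\mathcal{E}(G_n))\cong G_n$ since $\mathcal{R}$ is an isomorphism invariant. For the claim: within $\mathcal{E}(G_n)$ the $\phi(d)$ copies of a fixed $d$ all have the same closed neighborhood (namely, themselves together with all copies of divisors comparable to $d$), so $\mathcal{R}$ merges each block of copies back to a single vertex; and two copies over distinct divisors $d\neq e$ have the same closed neighborhood only if $d=e$ — here one must check that distinct divisors of $n$ never have identical comparability-neighborhoods in $G_n$, which holds because $d$ is comparable to itself but, if $d\neq e$, at least one of $d,e$ fails to be comparable to the other or (if they are comparable, say $d\mid e$, $d\neq e$) the vertex $d$ is in the closed neighborhood of $d$ but the closed neighborhoods differ since $e\notin N[d]\cup\{d\}$ would need care — more cleanly, $N[d]$ in $G_n$ determines $d$ as its unique minimum-vs-maximum is not quite right, so I will instead argue $N[d]=N[e]$ forces $d\preceq e$ and $e\preceq d$, hence $d=e$, because $d\in N[d]=N[e]$ means $d$ comparable to $e$, and symmetrically. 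Thus $\mathcal{R}(\mathcal{E}(G_n))$ has vertex set $D$ with the comparability adjacency, i.e.\ equals $G_n$.

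\textbf{$(c)\Rightarrow(a)$.} Assume $G_n\cong\mathcal{R}(OD(G))$. The graph $\mathcal{R}(OD(G))$ is obtained by merging elements of $G$ with equal closed neighborhoods in $OD(G)$; two elements $x,y$ of the same order $d$ always have the same closed $OD(G)$-neighborhood, so the blocks of $\mathcal{R}(OD(G))$ refine into unions of order-classes, and the number of vertices of $\mathcal{R}(OD(G))$ is at most the number of distinct element orders occurring in $G$, which is at most $d(n)=|D|$. Since $G_n$ has exactly $d(n)$ vertices, equality forces: every divisor of $n$ is realized as the order of some element of $G$, and distinct order-classes are never merged. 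A group of order $n$ all of whose divisors occur as element orders must be cyclic — one can see this by induction / by Cauchy and the fact that an element of order $n$ must exist (take $d=n$). Then I conclude $G$ is cyclic. The main obstacle I anticipate is the bookkeeping in $(b)\Rightarrow(c)$ and $(c)\Rightarrow(a)$: precisely pinning down when two vertices of $OD(G)$ (resp.\ $G_n$) share a closed neighborhood, and ruling out that an order-class could accidentally merge with a block coming from a different order in a non-cyclic group; handling this cleanly may require the observation that $x$ and $o(x)$-worth of data, together with Cauchy's theorem, force the full divisor set to appear, which is the crux of forcing cyclicity.
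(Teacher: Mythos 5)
Your proof reaches the same crux as the paper's but is organized differently. The paper proves $(a)\Leftrightarrow(b)$ and $(a)\Leftrightarrow(c)$ as two separate biconditionals, in each case extracting cyclicity from the single observation that $n\in D$ forces $G$ to contain an element of order $n$; you instead run the cycle $(a)\Rightarrow(b)\Rightarrow(c)\Rightarrow(a)$. Your $(a)\Rightarrow(b)$ coincides with the paper's. Your $(b)\Rightarrow(c)$ is genuinely new relative to the paper: you prove the general identity $\mathcal{R}(\mathcal{E}(G_n))\cong G_n$ and transport it through the isomorphism, rather than re-deriving $(c)$ from $(a)$. Your $(c)\Rightarrow(a)$ is in fact more careful than the paper's: the paper tacitly assumes the abstract isomorphism $G_n\cong\mathcal{R}(OD(G))$ matches divisors with element orders, whereas your vertex count ($|V(G_n)|=|D|$ versus $|V(\mathcal{R}(OD(G)))|\le \#\{\text{element orders}\}\le |D|$) forces every divisor of $n$, in particular $n$ itself, to occur as an element order without assuming anything about how the isomorphism acts.

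Two cautions. First, the final deduction in your $(b)\Rightarrow(c)$ --- ``$N[d]=N[e]$ forces $d\preceq e$ \emph{and} $e\preceq d$'' --- is not valid: $d\in N[e]$ only gives that $d$ and $e$ are comparable (one divides the other), and comparable vertices of $G_n$ genuinely can share closed neighborhoods (e.g.\ $1$ and $p$ in $G_p$). The claim you actually need lives in $\mathcal{E}(G_n)$, where the correct argument is: incomparable divisors give non-adjacent copies, which can never share a closed neighborhood since each contains itself but not the other; and for $d\mid e$, $d\ne e$, a second copy of $e$ (or of $d$) separates the two closed neighborhoods whenever $\phi(e)>1$ or $\phi(d)>1$. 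Second, under the paper's literal definition of $\mathcal{R}$ (identical \emph{closed} neighborhoods), two non-adjacent copies of the same divisor are \emph{not} merged, because each closed neighborhood contains the vertex itself but not its non-adjacent twin; the reduction behaves as you (and the paper) intend only if ``same closed neighborhood'' is read as ``twins,'' i.e.\ equal neighborhoods after deleting the two vertices being compared. This defect is inherited from the paper rather than introduced by you, but your $(b)\Rightarrow(c)$ leans on the precise form of $\mathcal{R}$ more heavily than the paper's route does, so it should be stated and fixed explicitly.
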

\begin{proof}

$(a) \Leftrightarrow(b)$: Let $D$ be the set of all $(positive)$ divisors of $n$. Suppose $G$ is cyclic. Then for each $d \in D$,  $G$ has exactly $\phi(d)$ elements of order $d$, cf. \cite[Theorem 4.4]{G}. Hence by definitions of $OD(G)$ and $G_n$,  we get that $E(G_n) \cong OD(G)$. Conversely, suppose $\mathcal{E}(G_n) \cong OD(G)$. Since $n \in D$, so $G_n$ has a vertex associated to $n$ and hence $\mathcal{E}(G_n)$  has $\phi(n)$ vertices associated to the group elements of order $n$. Hence $G$ is cyclic.

$(a) \Leftrightarrow(c)$: Let $D$ be the set of all distinct $(positive)$ divisors of $n$. Suppose $G$ is cyclic. Then for each $d \in D$,  $G$ has exactly $\phi(d)$ elements of order $d$, cf. \cite[Theorem 4.4]{G}. Therefore, by definitions, $G_n$ is isomorphic to the reduced graph of $OD(G)$. Conversely, let $G_n \cong \mathcal{R}(OD(G))$. Since, $OD(G)$ consists of independent sets associated to elements of same order in $G$ and by reducing $OD(G)$ we obtain the comparability graph $G_n$  (in $G_n$ we have a vertex associated to each divisor of $n$), therefore in $OD(G)$ we have independent set of vertices associated to each divisor of  $|G|=n$. In particular, $G$ must have elements or order $n$. Hence $G$ is cyclic.

\end{proof}

\begin{remark}
We saw that for  finite cyclic groups $G$, the comparability graph $G_n$ can be studied by passing to the order divisor graph $OD(G)$. Also the order divisor graph $OD(G)$ can be studied by passing to the comparability graph $G_n$. That is, we have maps

                            $$\mathcal{E}$$
$$\{Comparability~graphs\}~\rightleftharpoons~\{Order~divisor~graphs\}$$
$$\mathcal{R}$$

\end{remark}

\end{document}